\newtheorem{theorem}{Theorem}
\newtheorem{corollary}{Corollary}
\newtheorem{remark}{Remark}
\newtheorem{example}{Example}
\newtheorem{proposition}{Proposition}
\def\R     {\ensuremath{\mathbb R}}
\def\N     {\ensuremath{\mathbb N}}
\begin{document}

\def\C{\mathbb{C}}
\def\H{\mathcal{H}}
\def\R{\mathbb{R}}
\def\N{\mathbb{N}}
\def\Z{\mathbb{Z}}
\def\a{\alpha}
\def\b{\beta}
\def\g{\gamma}




\title{Rational approximation to the fractional Laplacian 
operator in reaction-diffusion problems\thanks{{This work was partially supported by
GNCS-INdAM and FRA-University of Trieste.}}}

\author[a]{Lidia Aceto}   
\affil[a]{Department of Mathematics,
University of Pisa, Italy}

\author[b]{Paolo Novati}
\affil[b]{Department of Mathematics and Geosciencies,
University of Trieste, Italy}

\maketitle

\begin{abstract}
This paper provides a new numerical strategy to solve fractional in space
reaction-diffusion equations on bounded domains under homogeneous Dirichlet
boundary conditions. Using the matrix transform method the fractional
Laplacian operator is replaced by a matrix which, in general, is dense. The
approach here presented is based on the approximation of this matrix by the
product of two suitable banded matrices. This leads to a semi-linear initial
value problem in which the matrices involved are sparse. Numerical results
are presented to verify the effectiveness of the proposed solution strategy.
\end{abstract}

\noindent {\bf  Keyword:}
 fractional Laplacian operator, matrix functions, Guass-Jacobi rule.
\smallskip

\noindent  {\bf MSC}:   65F60, 35R11, 65D32.

 \section{Introduction}
Fractional-order in space mathematical models, in which an integer-order
differential operator is replaced by a corresponding fractional one, are
becoming increasingly used since they provide an adequate description of
many processes that exhibit anomalous diffusion. This is due to the fact
that the non-local nature of the fractional operators enable to capture the
spatial heterogeneity that characterizes these processes.

There are however some challenges when facing fractional models. First of
all, there is no unique way to define fractional in space derivatives and,
in general, these definitions are not equivalent especially when more than
one spatial dimension is considered \cite{Yu08}. In addition, considering
that the value of the solution at a given point depends on the solution
behavior on the entire domain, it is intuitive to understand that the
boundary conditions deserve a particular attention and should be
appropriately chosen in order to model the phenomenon properly.

In this paper we consider the following fractional in space
reaction-diffusion differential equation
\begin{equation}  \label{prob}
\frac{\partial u(\mathbf{x},t)}{\partial t} = - \kappa_\alpha \,
(-\Delta)^{\alpha/2} u(\mathbf{x},t) + f(\mathbf{x},t,u), \qquad \mathbf{x}
\in \Omega \subset \mathbb{R}^n, \quad t>0,
\end{equation}
subject to homogeneous Dirichlet boundary conditions
\begin{equation}  \label{Dirbc}
u(\mathbf{x},t)|_{\hat \Omega} = 0, \qquad \hat \Omega = \mathbb{R}^n
\setminus \Omega,
\end{equation}
and the initial condition
\begin{equation}  \label{inifun}
u(\mathbf{x},0)=u_0(\mathbf{x}),
\end{equation}
where $ \kappa_\alpha$ represents the diffusion coefficient and  the forcing 
term $f$ and $u_0$ are sufficiently smooth  functions.  The symmetric space 
fractional derivative $-(-\Delta)^{\alpha/2}$ of  order $\alpha$ ($1 < \alpha 
\le 2$) is defined through the spectral 
decomposition of the homogeneous Dirichlet Laplace operator $(-\Delta),$
\cite[Definition~2]{Il06}.
Assuming that $\Omega$ is a Lipschitz domain, the spectrum of $(-\Delta)$ is
discrete and positive, and accumulate at infinity. Thus,  

\begin{equation}  \label{lapcont}
-(-\Delta)^{\alpha/2} u = \sum_{s=1}^\infty \mu_s^{\alpha/2} c_s \varphi_s,
\end{equation}
where $c_s= \int_\Omega u\, \varphi_s$ are the Fourier coefficients of $u,$
and $\{\mu_s\},$ $\{\varphi_s\}$ are the eigenvalues and the eigenvectors of
$(-\Delta),$ respectively.

We remark that the fractional power of the Laplace operator is alternatively
defined in the literature using the Fourier transform on an infinite
domain \cite{Sa87}, with a natural extension to finite domain when the
function $u$ vanishes on and outside the boundary of the spatial domain. In
this case, in fact, it is possible to consider non-local problems on bounded
domain by simply assuming that the solution of fractional problem is equal to
zero everywhere outside the domain of interest. Using such
definition and assuming to work with homogeneous Dirichlet boundary conditions, 
in \cite[Lemma 1]{Ya10} it has been proved that the one-dimensional fractional
Laplacian operator $-(-\Delta)^{\alpha/2}$ as defined in (\ref{lapcont}) is
equivalent to the Riesz fractional derivative. Hence, it can be approximated by the `fractional centered
derivative' introduced by Ortigueira in \cite{Ort2006}.  \c{C}elik and Duman
in \cite{CD12} have used such a method for solving a fractional diffusion
equation with the Riesz fractional derivative in a finite domain. Moreover,  by
exploiting the decay of the coefficients characterizing the method,  in
\cite{Pop13} a `short memory' version of the scheme has been implemented.
However, both the original and the modified methods only work for
one-dimensional space cases.

A mainstay in the numerical treatment of partial differential problems of type
(\ref{prob})--(\ref{inifun}) is to apply the method of lines. Discretizing in
space with a uniform mesh of stepsize $h$ in each domain direction and using the
matrix transfer technique proposed in \cite{Il05,Il06} by Ili\'{c} et al.,
we obtain
\[
-(-\Delta)^{\alpha/2} u \approx - \frac{1}{h^\alpha}L^{\alpha/2} u,
\]
where $L$ is the approximate matrix representation of the standard Laplacian
obtained by using finite difference methods. Consequently, (\ref{prob}) is 
transformed into a system of ordinary differential equations
\begin{equation}  \label{ivp}
\frac{d \mathbf{u} }{dt} = - \frac{\kappa_\alpha}{h^\alpha} \, L^{\alpha/2}
\mathbf{u}+ \mathbf{f},
\end{equation}
where $\mathbf{u}$ and $ \mathbf{f}$ denote the vectors of node values of $u$
and $f,$ respectively.
The matrix $L$ raised to the fractional power $\alpha/2$ is, in general, a
dense matrix which could be also very large depending on the numbers of mesh
points used for the spatial discretization. Therefore, the
computational effort for solving (\ref{ivp}) could be really heavy,
independently of the integrator used. Recently, some authors have developed
techniques for reducing this cost. In particular, an approach which can be 
equally applicable to fractional-in-space problems in two or three spatial 
dimensions has been considered  in \cite{BHK2012}. The computational heart of 
this approach is the efficient computation of the fractional power of a matrix 
times a vector. However, its effectiveness depends on the mesh discretization.

In this paper, we propose a solution strategy based on a suitable
approximation of $L^{\alpha/2}.$ In particular, we look for a decomposition
of the type
\[
L^{\alpha/2} \approx M^{-1} K,
\]
where $M$ and $K$ are both banded matrices arising from a rational
approximation of the function $z^{\alpha/2-1}, $ based on the Gauss-Jacobi
rule applied to the integral representation of $L^{\alpha/2},$ cf.  
\cite{fgs}. The poles of the formula depends on a continuous parameter whose 
choice is crucial for a fast and accurate approximation. The above 
factorization allow to approximate the solution of (\ref{ivp}) by solving
\begin{equation}  \label{ivpband}
M \, \frac{d \mathbf{u} }{dt} = - \frac{\kappa_\alpha}{h^\alpha} \, K
\mathbf{u}+ M \mathbf{f}.
\end{equation}
By virtue of the structure of the matrices $M$ and $K$ the numerical
solution of (\ref{ivpband}) may be computed in a more efficient way with
respect to the one of (\ref{ivp}). We remark that the approach is independent of 
the Laplacian working dimension. \\

The paper is organized as follows. In Section \ref{sec2}, the
main results about the matrix transfer technique are recalled.  Section
\ref{sec3} is devoted to the construction of the rational approximation 
together with the analysis of the asymptotically optimal choices of the poles. 
In  Section \ref{sec4} a theoretical error analysis is presented. Numerical
experiments are carried out in Section \ref{sec5}, and the conclusions follow 
in Section \ref{sec:conclusions}.

\section{Background on the matrix transfer technique} \label{sec2} 

For an independent reading, in this section the basic facts
concerning the matrix transfer technique proposed by Ili\'{c} et al. in
\cite{Il05,Il06} to discretize the one-dimensional fractional Laplacian
operator are recalled. In addition, since in this work we also lead with
problems in two spatial dimensions, we refer to the results given in \cite{Ya11}
as well. \newline

Working with the basic assumption that the fractional Laplacian operator
with Dirichlet boundary conditions can be defined as the fractional
power of the standard Laplacian, the matrix transfer technique simply consists
in approximating the operator $-(-\Delta)^{\alpha/2}$ through the matrix 
$-h^{-\alpha} L^{\alpha/2}$,
where $h^{-2} L$ is any finite-difference approximation of $-\Delta$ on a uniform mesh of size $h$.
The only important requirement is that the matrix $L$ is positive
definite so that its fractional power is well defined. This requirement
is fulfilled by the existing standard central difference schemes.
Working like that, the original
problem  (\ref{prob})--(\ref{inifun}) is then transformed into the semi-linear
initial value problem

\begin{eqnarray}  \label{IVP1}
\frac{d \mathbf{u}}{dt} &=& - \frac{\kappa_\alpha}{h^\alpha} \, L^{\alpha/2}
\mathbf{u}+ \mathbf{f}, \qquad t>0, \\
\mathbf{u}(0)&=&\mathbf{u}_0,  \notag
\end{eqnarray}
where $\mathbf{u}_0$ denotes the vector of node values of $u_0.$

It is important to remark that while $L$ is typically sparse, when $\alpha \neq 
2,$
the matrix $L^{\alpha/2}$ loses its sparsity and becomes dense.
Observe moreover that the stiffness property of (\ref{IVP1}) for $\alpha=2$ is 
essentially inherited by the fractional counterpart so that an implicit scheme 
or an exponential integrator is generally needed for solving this initial value 
problem. In both cases the density of $L^{\alpha/2}$ may lead to a computational 
demanding integrator when the discretization is sharp. In order to overcome the 
limitations in terms of computational efficiency, we propose a strategy based on 
a suitable approximate factorization of $L^{\alpha/2}.$ In the next section we 
focus on the construction of such approximation.

\section{Approximation to the matrix fractional power} \label{sec3}

From the theory of matrix functions (see \cite{Hi} for a survey) we know
that the fractional power of a generic matrix $A$ can be written as
a contour integral
\[
A^\beta =\frac{A}{2\pi i}\int_{\Gamma }z^{\beta -1}(zI-%
A)^{-1}dz,
\]
where $\Gamma$ is a suitable closed contour enclosing the spectrum of $A$, $\sigma (A)$, in its 
interior. The following known result (see, e.g., \cite{BHM})
expresses $A^\beta$ in terms of a real integral. The proof
is based on a particular choice of $\Gamma$ and a subsequent change of
variable.

\begin{proposition}
\label{pro1}Let $A\in \mathbb{R}^{m \times m}$ be such that $\sigma
(A)\subset \mathbb{C}\backslash \left( -\infty ,0\right].$ For $0<\beta <1$ the 
following representation holds 
\[
A^\beta=\frac{A\sin (\beta \pi )}{\beta \pi } \int_{0}^{\infty }(\rho ^{1/\beta 
}I+A)^{-1}d\rho.   
\]
\end{proposition}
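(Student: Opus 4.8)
The plan is to start from the contour integral representation
\[
A^\beta=\frac{A}{2\pi i}\int_{\Gamma }z^{\beta -1}(zI-A)^{-1}dz,
\]
and choose the contour $\Gamma$ to be a keyhole contour that hugs the branch cut of $z^{\beta-1}$ along the negative real axis (permissible precisely because $\sigma(A)\subset\mathbb{C}\setminus(-\infty,0]$), consisting of a large circle of radius $R$, a small circle of radius $\varepsilon$ around the origin, and the two sides of the cut. Since $0<\beta<1$, the integrand decays like $|z|^{\beta-2}$ at infinity and blows up only like $|z|^{\beta-1}$ near $0$, so the contributions of the large and small circles vanish as $R\to\infty$ and $\varepsilon\to0$ respectively. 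What remains is the integral along the two lips of the cut.

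Next I would evaluate the two lip integrals. Writing $z=te^{i\pi}$ on the upper side and $z=te^{-i\pi}$ on the lower side, for $t\in(0,\infty)$, the factor $z^{\beta-1}$ picks up $e^{i\pi(\beta-1)}$ and $e^{-i\pi(\beta-1)}$ respectively, while $(zI-A)^{-1}$ becomes $(-tI-A)^{-1}=-(tI+A)^{-1}$ on both sides. Combining the two contributions with the correct orientation, the difference $e^{i\pi(\beta-1)}-e^{-i\pi(\beta-1)}=-2i\sin(\beta\pi)$ emerges, and after dividing by $2\pi i$ one obtains
\[
A^\beta=\frac{A\sin(\beta\pi)}{\pi}\int_{0}^{\infty}t^{\beta-1}(tI+A)^{-1}dt.
\]
Finally I would perform the change of variable $t=\rho^{1/\beta}$, so that $t^{\beta-1}dt=\frac{1}{\beta}\,d\rho$, which turns the last integral into $\frac{1}{\beta}\int_0^\infty(\rho^{1/\beta}I+A)^{-1}d\rho$ and yields exactly the claimed formula.

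The main obstacle — or at least the point needing the most care — is the rigorous justification of deforming $\Gamma$ onto the keyhole contour and the vanishing of the circular pieces: one must check that $(zI-A)^{-1}$ stays bounded (indeed $O(1/|z|)$) as $|z|\to\infty$ and remains bounded near $z=0$ since $0\notin\sigma(A)$, uniformly in $\arg z$, so that the $R$-circle integral is $O(R^{\beta-1})\to0$ and the $\varepsilon$-circle integral is $O(\varepsilon^{\beta})\to0$. Once these estimates are in hand, the branch-cut bookkeeping and the substitution are routine. One should also note that the identity can equivalently be verified scalar-by-scalar on the spectrum (it suffices to prove it for $A=\lambda\in\mathbb{C}\setminus(-\infty,0]$ and invoke the spectral/holomorphic functional calculus), which sidesteps any matrix-specific subtlety and reduces everything to the classical Beta-function-type integral $\lambda^\beta=\frac{\lambda\sin(\beta\pi)}{\pi}\int_0^\infty t^{\beta-1}(t+\lambda)^{-1}dt$.
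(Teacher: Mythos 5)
Your argument is correct and follows essentially the same route the paper indicates for this known result (it cites Bini--Higham--Meini and notes the proof rests on a particular choice of the contour $\Gamma$ and a subsequent change of variable): collapsing $\Gamma$ onto a keyhole around the branch cut $(-\infty,0]$, checking the $O(R^{\beta-1})$ and $O(\varepsilon^{\beta})$ estimates on the circular arcs, extracting the factor $\sin(\beta\pi)$ from the two lips, and substituting $t=\rho^{1/\beta}$ so that $t^{\beta-1}dt=\frac{1}{\beta}d\rho$. Your closing remark that the identity can instead be verified scalarly on $\mathbb{C}\setminus(-\infty,0]$ and lifted by the holomorphic functional calculus is a sound shortcut, but it is an addition rather than a departure from the paper's approach.
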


In order to confine the dependence of $\beta $ to a weight function, we
consider the change of variable
\begin{equation}
\rho ^{1/\beta }=\tau \frac{1-t}{1+t},\qquad \tau >0,  \label{js}
\end{equation}
yielding
\begin{align*}
&\frac{1}{\beta }\int_{0}^{\infty }(\rho ^{1/\beta }I+A)^{-1}d\rho
\\
&=2\int_{-1}^{1}\left( \tau \frac{1-t}{1+t}\right) ^{\beta -1}\left( \tau
\frac{1-t}{1+t}I+A\right) ^{-1}\frac{\tau }{\left( 1+t\right) ^{2}}%
dt \\
&=2\tau ^{\beta }\int_{-1}^{1}\left( 1-t\right) ^{\beta -1}\left(
1+t\right) ^{-\beta }\left( \tau \left( 1-t\right) I+\left( 1+t\right)
A\right) ^{-1}dt, 
\end{align*}
and hence
\begin{equation}
A^{\beta }=A \frac{\sin (\beta \pi )}{\pi }2\tau ^{\beta
}\int_{-1}^{1}\left( 1-t\right) ^{\beta -1}\left( 1+t\right) ^{-\beta
}\left( \tau \left( 1-t\right) I+\left( 1+t\right) A\right) ^{-1}dt,
\label{nint}
\end{equation}
that naturally leads to the use of the $k$-point Gauss-Jacobi rule.
Such a formula yields a rational approximation of the type
\begin{equation}
A^{\beta }\approx R_{k}(A):=A\sum_{j=1}^{k}\gamma _{j}(\eta 
_{j}I+A)^{-1},  \label{rapp}
\end{equation}
where the coefficients $\gamma _{j}$ and $\eta _{j}$ are given by
\[
\gamma _{j}=\frac{2\sin (\beta \pi )\tau ^{\beta }}{\pi 
}\frac{w_{j}}{1+\vartheta _{j}},\qquad \eta _{j}=\frac{\tau (1-\vartheta 
_{j})}{1+\vartheta _{j}},  
\]
in which $w_{j}$ and $\vartheta _{j}$ are, respectively, the 
weights and nodes of the Gauss-Jacobi quadrature rule with weight function 
$(1-t)^{\beta -1}(1+t)^{-\beta}$. Of course, the above approximation can be used 
in our case with $\beta=\alpha/2$ whenever $A=L$ represents the discrete 
Laplacian operator with Dirichlet boundary conditions, whose spectrum is 
contained in $\mathbb{R}^{+}.$ In the field of fractional calculus, we need to 
mention that this technique has already been used in \cite{AMN} for the 
approximation of the Caputo's fractional derivative. At this point, denoting by 
$z\, P_{k-1}(z)$ and $Q_{k}(z)$ the polynomials of degree $k$ such that 
$R_{k}(z)=(z\, P_{k-1}(z))/Q_{k}(z),$ we can approximate the solution of 
(\ref{IVP1}) by solving (\ref{ivpband}) with $M=Q_{k}(L)$ and $K=L P_{k-1}(L).$
We remark that the use of the Gauss-Jacobi rule ensures
that $\gamma _{j}>0$ and $\eta _{j}>0$ for each $j$, and hence it is
immediate to verify that the spectrum of $R_{k}(L)$ is strictly contained in
the positive real axis. This condition is fundamental to preserve the
stability properties of (\ref{IVP1}) whenever $L^{\alpha /2}$ is replaced by
$R_{k}(L).$

\subsection{Choice of $\protect\tau $}

The choice of the parameter $\tau $ in the change of variable (\ref{js}) is
crucial for the quality of the approximation attainable by (\ref{rapp}).
Assuming that the generic matrix $A$ is symmetric and positive
definite, let $\lambda _{\min }$ and $\lambda _{\max }$ be its smallest 
and largest eigenvalue, respectively. Let moreover $\Lambda =[\lambda _{\min
},\lambda _{\max }].$ It is well known that
\begin{equation}  \label{normerr}
\left\Vert A^{\beta }-R_{k}(A)\right\Vert _{2}\leq
\max_{\Lambda }\left\vert \lambda ^{\beta }-R_{k}(\lambda )\right\vert.
\end{equation}
In this view, looking at (\ref{nint}), a good choice of $\tau $ is the one
that minimizes, uniformly with respect to $\lambda \in \Lambda,$  the error
of the Gauss-Jacobi formula when applied to the computation of
\[
\int_{-1}^{1}\left( 1-t\right) ^{\beta -1}\left( 1+t\right) ^{-\beta }\left(
\tau \left( 1-t\right) +\left( 1+t\right) \lambda \right) ^{-1}dt,\qquad
\lambda \in \Lambda .
\]

From the theory of best uniform polynomial approximation and its application to
the analysis of the Gauss quadrature rules (see e.g.  \cite{Tre} for a recent
study) it is known that the position of the poles of the integrand function
with respect to the interval of integration defines the quality of the
approximation. In our case, we observe that for each $\tau \in \Lambda $ the
poles of the integrand function
\[
f_{\tau ,\lambda }(t)=\left( \tau \left( 1-t\right) +\left( 1+t\right)
\lambda \right) ^{-1},
\]
are functions of $\lambda $ defined by
\[
p_{\tau }(\lambda )=\frac{\tau +\lambda }{\tau -\lambda },
\]
and we clearly have $p_{\tau }(\lambda )>1$ for $\lambda <\tau $, and $p_{\tau 
}(\lambda )<-1$ for $\lambda >\tau $. Our aim is to define $\tau$ in order to 
maximize the distance of the set
\[
Q_{\tau }=\{ p_{\tau }(\lambda ),\lambda \in \Lambda \}
\]
from the interval of integration $\left[ -1,1\right] \subset \mathbb{R} 
\backslash Q_{\tau }$. We observe that for $\lambda _{\min }<\tau <\lambda
_{\max }$ the worst case is given by $\lambda =\lambda _{\min }$ or $\lambda
=\lambda _{\max }$ since we have respectively
\[
\min_{\lambda \in \Lambda }\mbox{dist}(p_{\tau }(\lambda ),[-1,1])=p_{\tau
}(\lambda _{\min })-1,
\]
or
\[
\min_{\lambda \in \Lambda }\mbox{dist}(p_{\tau }(\lambda ),[-1,1])=-p_{\tau
}(\lambda _{\max })-1.
\]

As consequence, the idea is to set $\tau$ such that
\[
p_{\tau }(\lambda _{\min })-1=-p_{\tau }(\lambda _{\max })-1,   
\]
that leads directly to the equation%
\[
\frac{\tau +\lambda _{\min }}{\tau -\lambda _{\min }}=-\frac{\tau +\lambda
_{\max }}{\tau -\lambda _{\max }},
\]
whose solution is
\begin{equation}
\tau _{opt}=\sqrt{\lambda _{\min }\lambda _{\max }}.  \label{tauopt}
\end{equation}
Formally, $\tau _{opt}$ is given by
\[
\tau _{opt}=\arg \max_{\lambda _{\min }<\tau <\lambda _{\max }}\min_{\lambda
\in \Lambda }\left\vert p_{\tau }(\lambda )\right\vert.
\]

In this way, the set $Q_{\tau _{opt}}$ is symmetric with respect to the
origin, that is $Q_{\tau _{opt}}=(-\infty ,-\gamma )\cup (\gamma ,+\infty ),$ 
where
\begin{equation}
\gamma =\frac{\sqrt{\kappa (A)}+1}{\sqrt{\kappa (A)}-1}, \label{gam}
\end{equation}
in which $\kappa (A)$ denotes the spectral condition number of $A.$ This 
situation is summarized in an example reported in Figure \ref{fig:Figura1}.
\begin{figure*}[tbp]
\begin{center}
\includegraphics[width=1.0\textwidth]{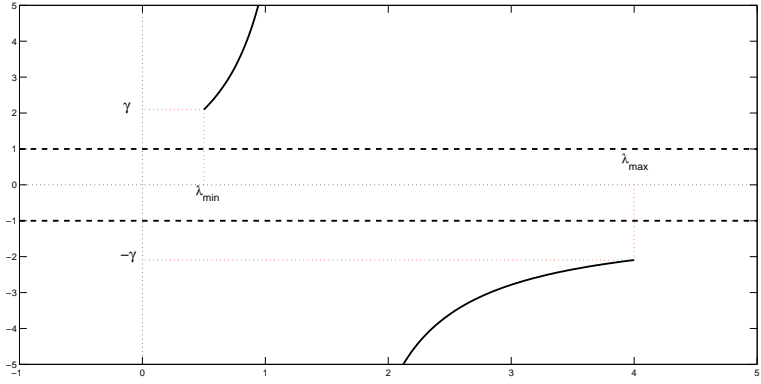}
\end{center}
\caption{Example of function $p_{\tau }(\lambda )$ for $\lambda
_{\min }=0.5$, $\lambda _{\max }=4.$ The choice of $\tau $ as in 
(\ref{tauopt}) ensures the symmetry of the set $Q_{\tau }.$ The minimum 
distance of the curve $p_{\tau }(\lambda )$ from the set $[-1,1]$ is given by 
$\gamma -1$ and is attained in either $\lambda =\lambda _{\min }$ or $\lambda
=\lambda _{\max }.$}
\label{fig:Figura1}
\end{figure*}

\section{Error analysis} \label{sec4}

In this section we analyze the error of the rational approximation (\ref{rapp}) 
with the choice of $\tau =\tau _{opt}$ in (\ref{js}). We start with
the following result, whose proof is given in \cite[Theorems 
4.3 and 4.4]{Tre}.

\begin{theorem}
\label{tr}Let $g(z)$ be a function analytic in an open subset of the complex
plane containing the ellipse
\[
\Gamma _{\rho}=\left\{ z=\frac{1}{2}\left( \rho e^{i\theta }+\frac{1}{\rho
e^{i\theta }}\right) ,\rho >1,\theta \in \lbrack 0,2\pi) \right\} .
\]
Let moreover $p_{k}^{\ast }[g]$ be the polynomial of degree $\leq k$ of best
uniform approximation of $g$ in $[-1,1]$ and
\[
E_{k}^{\ast }[g]=\max_{t\in \lbrack -1,1]}\left\vert g(t)-p_{k}^{\ast
}[g](t)\right\vert .
\]
Then
\begin{equation}
E_{k}^{\ast }[g]\leq \frac{2M(\rho )}{(\rho -1)\rho ^{k}},  \label{nt}
\end{equation}
where
\[
M(\rho )=\max_{z\in \Gamma _{\rho }}\left\vert g(z)\right\vert.
\]
\end{theorem}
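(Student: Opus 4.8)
The plan is to bound the best uniform error from above by the error of a concrete, explicitly constructed polynomial — the truncated Chebyshev expansion of $g$ — and then to control the tail of that expansion using the analyticity of $g$ inside $\Gamma_{\rho}$. Write $g$ in its Chebyshev series $g(t)=\sum_{n=0}^{\infty}a_{n}T_{n}(t)$ on $[-1,1]$, where $T_{n}$ is the degree-$n$ Chebyshev polynomial of the first kind, and set $s_{k}(t)=\sum_{n=0}^{k}a_{n}T_{n}(t)$. Since $s_{k}$ has degree $\leq k$ and $\|T_{n}\|_{\infty,[-1,1]}=1$, the definition of $E_{k}^{\ast}[g]$ and the triangle inequality give
\[
E_{k}^{\ast}[g]\leq \|g-s_{k}\|_{\infty,[-1,1]}\leq \sum_{n=k+1}^{\infty}|a_{n}|.
\]

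The core of the argument is the coefficient estimate $|a_{n}|\leq 2M(\rho)\rho^{-n}$. To obtain it I would pass to the Joukowski variable through $t=\tfrac12(z+z^{-1})$, which maps the circle $|z|=\rho$ onto $\Gamma_{\rho}$ and the open annulus $1<|z|<\rho$ together with its reflection $z\mapsto 1/z$ onto the region bounded by $\Gamma_{\rho}$ with the segment $[-1,1]$ removed. Because $g$ is analytic in an open neighbourhood of the closed region bounded by $\Gamma_{\rho}$ — exactly the hypothesis — the composition $F(z)=g\!\left(\tfrac12(z+z^{-1})\right)$ extends to a single-valued function holomorphic on a neighbourhood of the closed annulus $1/\rho\leq |z|\leq \rho$ and satisfying $F(z)=F(1/z)$. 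Its Laurent coefficients $c_{n}$ therefore obey $c_{-n}=c_{n}$, and comparing $F(z)=\sum_{n}c_{n}z^{n}$ with $g(t)=c_{0}+2\sum_{n\geq 1}c_{n}T_{n}(t)$ — using $T_{n}(\tfrac12(z+z^{-1}))=\tfrac12(z^{n}+z^{-n})$ — identifies $a_{n}=2c_{n}$ for $n\geq 1$. Writing $c_{n}=\frac{1}{2\pi i}\oint_{|z|=\rho}F(z)z^{-n-1}\,dz$ and bounding the integrand by $M(\rho)$ on $|z|=\rho$ yields $|c_{n}|\leq M(\rho)\rho^{-n}$, hence $|a_{n}|\leq 2M(\rho)\rho^{-n}$; in particular $\sum|a_n|<\infty$, which also justifies the uniform convergence of the Chebyshev series used above.

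Substituting this estimate into the tail bound and summing the geometric series,
\[
E_{k}^{\ast}[g]\leq \sum_{n=k+1}^{\infty}\frac{2M(\rho)}{\rho^{n}}=\frac{2M(\rho)}{\rho^{k}(\rho-1)},
\]
which is precisely (\ref{nt}). I expect the main obstacle to be the coefficient estimate: one must set up the Joukowski substitution carefully, check that $g$ lifts to a genuinely single-valued holomorphic function on the closed annulus (this is where the open-neighbourhood hypothesis is needed, to handle the endpoints $\pm 1$), and match Laurent with Chebyshev coefficients; everything after that is a geometric sum. An alternative to the annulus picture is to start from $a_{n}=\frac{2}{\pi}\int_{0}^{\pi}g(\cos\theta)\cos(n\theta)\,d\theta$ and deform the contour $\theta\mapsto\theta\pm i\log\rho$, but tracking the constant $M(\rho)$ is more transparent in the Laurent-coefficient formulation.
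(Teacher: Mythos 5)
Your proof is correct: it is the classical Bernstein-ellipse argument (Chebyshev expansion, coefficient bound $|a_n|\leq 2M(\rho)\rho^{-n}$ via the Joukowski map and Cauchy estimates on the annulus, then a geometric tail sum), which is exactly the content of Theorems 4.3 and 4.4 of Trefethen's paper that the authors cite in lieu of giving a proof. You also correctly read the hypothesis as analyticity on a neighbourhood of the closed region bounded by $\Gamma_{\rho}$, which is the intended (and needed) meaning of the statement.
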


\begin{theorem}
Let $A$ be a symmetric positive definite matrix and $0<\beta <1.$
Then for $k$ sufficiently large, the error of the rational approximation 
(\ref{rapp}), generated by the Gauss-Jacobi rule applied to the integral 
(\ref{nint}) for $\tau =\tau _{opt}\,,$ is given by
\[
\left\Vert A^{\beta }-R_{k}(A)\right\Vert _{2}\leq
C\left\Vert A\right\Vert _{2}\tau ^{\beta }\frac{\left( \rho
_{M}+1\right) }{\left( \rho _{M}-1\right) \left( \rho _{M}-\gamma \right) }
\frac{k}{\rho _{M}^{2k}},
\]
where $C$ is a constant independent of $k,$ and
\[
\rho _{M}=\gamma +\sqrt{\gamma ^{2}-1}.
\]
\end{theorem}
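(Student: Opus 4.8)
The plan is to collapse the matrix estimate to a scalar one via (\ref{normerr}) and then feed that into Theorem \ref{tr}. By (\ref{normerr}) it suffices to bound $\max_{\lambda\in\Lambda}|\lambda^{\beta}-R_{k}(\lambda)|$. Reading (\ref{nint}) and (\ref{rapp}) with $A$ replaced by the scalar $\lambda$, one sees that
\[
\lambda^{\beta}=\lambda\,\frac{2\sin(\beta\pi)\tau^{\beta}}{\pi}\,I[f_{\tau,\lambda}],\qquad R_{k}(\lambda)=\lambda\,\frac{2\sin(\beta\pi)\tau^{\beta}}{\pi}\,G_{k}[f_{\tau,\lambda}],
\]
where $I[\cdot]$ and $G_{k}[\cdot]$ denote, respectively, the exact integral and the $k$-point Gauss--Jacobi rule with weight $(1-t)^{\beta-1}(1+t)^{-\beta}$ on $[-1,1]$, and $f_{\tau,\lambda}(t)=(\tau(1-t)+(1+t)\lambda)^{-1}$ as in Section \ref{sec3}. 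Hence $|\lambda^{\beta}-R_{k}(\lambda)|=\lambda\,\frac{2\sin(\beta\pi)\tau^{\beta}}{\pi}\,\bigl|(I-G_{k})[f_{\tau,\lambda}]\bigr|$.

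Next I would bound the quadrature error by a polynomial best--approximation error. Since the $k$-point Gauss rule is exact on polynomials of degree $\le 2k-1$ and has positive weights with $G_{k}[1]=I[1]=\mu_{0}$, where $\mu_{0}=\int_{-1}^{1}(1-t)^{\beta-1}(1+t)^{-\beta}\,dt=\pi/\sin(\beta\pi)$ is a Beta integral, the usual argument gives $\bigl|(I-G_{k})[f_{\tau,\lambda}]\bigr|\le 2\mu_{0}\,E_{2k-1}^{\ast}[f_{\tau,\lambda}]$. Substituting, the $\sin(\beta\pi)/\pi$ factors cancel and, using $\lambda\le\lambda_{\max}=\|A\|_{2}$,
\[
|\lambda^{\beta}-R_{k}(\lambda)|\le 4\,\|A\|_{2}\,\tau^{\beta}\,E_{2k-1}^{\ast}[f_{\tau,\lambda}],\qquad \lambda\in\Lambda .
\]

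Now I would apply Theorem \ref{tr} with $2k-1$ in place of $k$. The only singularity of $f_{\tau,\lambda}$ is the simple pole at $t=p_{\tau}(\lambda)=(\tau+\lambda)/(\tau-\lambda)$, and for $\tau=\tau_{opt}$ this pole lies in $Q_{\tau_{opt}}=(-\infty,-\gamma]\cup[\gamma,+\infty)$ with $\gamma$ as in (\ref{gam}); in particular $|p_{\tau}(\lambda)|\ge\gamma$ for \emph{every} $\lambda\in\Lambda$. Consequently, for any $\rho$ with $\tfrac12(\rho+\rho^{-1})<\gamma=\tfrac12(\rho_{M}+\rho_{M}^{-1})$ --- equivalently $1<\rho<\rho_{M}$ --- the function $f_{\tau,\lambda}$ is analytic on and inside $\Gamma_{\rho}$ simultaneously for all $\lambda\in\Lambda$, and Theorem \ref{tr} yields $E_{2k-1}^{\ast}[f_{\tau,\lambda}]\le 2M_{\lambda}(\rho)/((\rho-1)\rho^{2k-1})$ with $M_{\lambda}(\rho)=\max_{z\in\Gamma_{\rho}}|f_{\tau,\lambda}(z)|$. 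Writing $f_{\tau,\lambda}(z)=[(\lambda-\tau)(z-p_{\tau}(\lambda))]^{-1}$ and using that the point of $\Gamma_{\rho}$ closest to the real pole $p_{\tau}(\lambda)$ is the vertex $\pm\tfrac12(\rho+\rho^{-1})$, an elementary computation --- the quantity $|\lambda-\tau|\,(|p_{\tau}(\lambda)|-\tfrac12(\rho+\rho^{-1}))$ is affine in $\tfrac12(\rho+\rho^{-1})$ and vanishes at $\gamma$ when $\lambda\in\{\lambda_{\min},\lambda_{\max}\}$ --- gives the $\lambda$-uniform bound $M_{\lambda}(\rho)\le\bigl(c_{0}(\gamma-\tfrac12(\rho+\rho^{-1}))\bigr)^{-1}$, with $c_{0}=\min(\tau-\lambda_{\min},\lambda_{\max}-\tau)$ depending on $A$ only.

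Finally I would balance the two competing $\rho$-effects: $\rho^{-(2k-1)}$ improves as $\rho\uparrow\rho_{M}$, while $M_{\lambda}(\rho)$ blows up like $(\rho_{M}-\rho)^{-1}$ since $\gamma-\tfrac12(\rho+\rho^{-1})=(\rho_{M}-\rho)(\rho\rho_{M}-1)/(2\rho\rho_{M})$. Choosing $\rho=\rho_{M}\,e^{-1/(2k)}$ keeps $\rho^{2k-1}$ within a constant factor of $\rho_{M}^{2k}$ while forcing $\gamma-\tfrac12(\rho+\rho^{-1})$ to be of order $(\rho_{M}-\gamma)/k$; plugging this into the chain of inequalities above, absorbing the $k$-independent quantities ($4$, $c_{0}^{-1}$, the bounded transition factors) into $C$, and using $\rho_{M}-\gamma=(\rho_{M}-1)(\rho_{M}+1)/(2\rho_{M})$ to put the prefactor in the stated form, yields the claimed bound. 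The genuinely delicate points are the estimate of $M_{\lambda}(\rho)$ \emph{uniformly} in $\lambda\in\Lambda$ --- here the choice $\tau=\tau_{opt}$ is essential, as it is precisely what keeps all the poles $p_{\tau}(\lambda)$ at distance $\ge\gamma$ from the origin, so that a single Bernstein ellipse serves the whole spectrum --- and the tuning $\rho=\rho(k)$, which manufactures the extra linear factor $k$; the remaining manipulations are bookkeeping, and they are what require $k$ to be large (so that $\rho_{M}e^{-1/(2k)}>1$ and the asymptotic estimates apply).
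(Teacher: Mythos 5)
Your proof is correct and follows essentially the same route as the paper: reduce to the scalar quadrature error for $f_{\tau,\lambda}$, convert it into the best-approximation error $E_{2k-1}^{\ast}[f_{\tau,\lambda}]$ via exactness on degree $2k-1$ and positivity of the Gauss--Jacobi weights, invoke Theorem \ref{tr} on a Bernstein ellipse --- which is available uniformly in $\lambda$ precisely because $\tau=\tau_{opt}$ keeps every pole $p_{\tau}(\lambda)$ at distance at least $\gamma$ --- and then take $\rho\approx\bigl(1-\tfrac{1}{2k}\bigr)\rho_{M}$ to generate the factor $k/\rho_{M}^{2k}$. Your estimate of $M_{\lambda}(\rho)$, which retains the $|\lambda-\tau|$ scaling through the constant $c_{0}$, is if anything slightly more careful than the paper's corresponding step; the difference is absorbed into the generic constant $C$, so the two arguments coincide in substance.
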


\begin{proof}
For $\lambda \in \Lambda$ let
\[
f_{\lambda }(t)=\left( \tau _{opt}\left( 1-t\right) +\left( 1+t\right)
\lambda \right) ^{-1},
\]
and
\[
I(f_{\lambda })=\int_{-1}^{1}\left( 1-t\right) ^{\beta -1}\left( 1+t\right)
^{-\beta }f_{\lambda }(t)dt.
\]
Let moreover $I_{k}(f_{\lambda })$ be the corresponding $k$-point
Gauss-Jacobi approximation with weights $w _{j}$, $j=1,\dots,k.$ By
standard arguments we have that
\begin{align}
\left\vert I(f_{\lambda })-I_{k}(f_{\lambda })\right\vert &\leq \left\vert
I(f_{\lambda }-p_{2k-1}^{\ast }[f_{\lambda }])\right\vert +\left\vert
I_{k}(f_{\lambda }-p_{2k-1}^{\ast }[f_{\lambda }])\right\vert    \nonumber\\
&\leq  2C_{\beta } E_{2k-1}^{\ast }[f_{\lambda }],  \label{u}
\end{align}
where, since $w_{j}>0,$
\[
C_{\beta }=\sum_{j=1}^k\left\vert w_{j}\right\vert
=\sum_{j=1}^k w_{j}=\int_{-1}^{1}\left( 1-t\right) ^{\beta
-1}\left( 1+t\right) ^{-\beta }dt.
\]
Now, independently of $\lambda \in \Lambda,$  the choice of $\tau =\tau
_{opt}$ makes possible to use the bound (\ref{nt}) for each $1<\rho <\rho
_{M}$ where $\rho _{M}$ solves
\[
\frac{1}{2}\left( \rho _{M}+\frac{1}{\rho _{M}}\right) =\gamma,
\]
since $Q_{\tau _{opt}}=(-\infty ,-\gamma )\cup (\gamma ,+\infty ).$ Thus by 
(\ref{u}), (\ref{nt}) and using
\[
M(\rho )=\max_{z\in \Gamma _{\rho }}\left\vert f_{\lambda }(z)\right\vert
\leq \frac{1}{\gamma -\frac{1}{2}\left( \rho +\frac{1}{\rho }\right) },
\]
we obtain
\begin{equation}
\left\vert I(f_{\lambda })-I_{k}(f_{\lambda })\right\vert \leq \frac{4 C_{\beta 
}}{(\rho -1)\rho ^{2k-1}\left( \gamma -\frac{1}{2}\left( \rho +\frac{1}{\rho 
}\right) \right) },\quad 1<\rho <\rho _{M}.  \label{bnd}
\end{equation}
Now, neglecting the factor $1/(\rho -1)$ and then minimizing with respect to
$\rho $ yields
\begin{align*}
\overline{\rho } &=\frac{2k-1}{2k}\left( \gamma +\sqrt{\gamma ^{2}-1+\frac{1
}{(2k-1)^{2}}}\right) \\
&\approx \frac{2k-1}{2k}\rho _{M}=:\rho ^{\ast }.
\end{align*}
Hence, for $k$ large enough (we need $\rho ^{\ast }>1$), we can use $\rho^{\ast 
}$ in (\ref{bnd}), obtaining
\begin{equation}
\left\vert I(f_{\lambda })-I_{k}(f_{\lambda })\right\vert \leq \frac{8 k e 
C_\beta \left( \rho _{M}+1\right) }{\left( \rho _{M}-1\right) \rho
_{M}^{2k}\left( \rho _{M}-\gamma \right) }.  \label{bint}
\end{equation}
Indeed, defining $k^{\ast }$ such that
\[
\frac{2k-1}{2k}\geq \frac{2}{\rho _{M}+1}\text{\quad for }k\geq k^{\ast }
\]
we have
\[
\frac{1}{\frac{2k-1}{2k}\rho _{M}-1}\leq \frac{\rho _{M}+1}{\rho _{M}-1}.
\]
Moreover, in (\ref{bint}) we have used the inequalities
\begin{align*}
\frac{1}{\left( \frac{2k-1}{2k}\rho _{M}\right) ^{2k-1}} &\leq \frac{e}{\rho 
_{M}^{2k-1}}, \\
 \gamma -\frac{1}{2}\left( \frac{2k-1}{2k}\rho _{M}+\frac{2k}{2k-1}
\frac{1}{\rho _{M}}\right) &\geq \frac{\rho _{M}\left( \rho
_{M}-\gamma \right) }{2k}.
\end{align*}
Finally, since by (\ref{nint})
\[
\left\Vert A^{\beta }-R_{k}(A)\right\Vert _{2}\leq \frac{%
\left\Vert A \right\Vert _{2}\sin (\beta \pi )}{\pi }2\tau ^{\beta
}\max_{\lambda \in \Lambda }\left\vert I(f_{\lambda })-I_{k}(f_{\lambda
})\right\vert ,
\]
using (\ref{bint}) we obtain the result.
\end{proof}

\begin{corollary}
The asymptotic convergence factor fulfils
\[
\overline{\lim_{k\rightarrow \infty }}\left\Vert A^{\beta }-R_{k}(
A)\right\Vert _{2}^{1/k}\leq \left( \frac{\sqrt[4]{\kappa (
A)}-1}{\sqrt[4]{\kappa ( A)}+1}\right) ^{2}.
\]
\end{corollary}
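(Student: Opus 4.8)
The plan is to extract the asymptotic convergence factor directly from the bound established in the preceding Theorem. Starting from
\[
\left\Vert A^{\beta }-R_{k}(A)\right\Vert _{2}\leq
C\left\Vert A\right\Vert _{2}\tau ^{\beta }\frac{\left( \rho
_{M}+1\right) }{\left( \rho _{M}-1\right) \left( \rho _{M}-\gamma \right) }
\frac{k}{\rho _{M}^{2k}},
\]
I would take the $k$-th root of both sides. All the factors that do not depend on $k$ — namely $C$, $\left\Vert A\right\Vert _{2}$, $\tau ^{\beta }$, and the rational expression in $\rho_M$ and $\gamma$ — raise to the $1/k$ power and tend to $1$ as $k\to\infty$. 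The factor $k^{1/k}$ also tends to $1$. Hence the only surviving contribution is $\left(\rho_M^{-2k}\right)^{1/k}=\rho_M^{-2}$, giving
\[
\overline{\lim_{k\rightarrow \infty }}\left\Vert A^{\beta }-R_{k}(
A)\right\Vert _{2}^{1/k}\leq \rho _{M}^{-2}.
\]

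It then remains to rewrite $\rho_M^{-2}$ in terms of $\kappa(A)$. Recall from the Theorem that $\rho _{M}=\gamma +\sqrt{\gamma ^{2}-1}$, and from (\ref{gam}) that $\gamma =\frac{\sqrt{\kappa (A)}+1}{\sqrt{\kappa (A)}-1}$. I would first compute $\rho_M^{-1}=\gamma-\sqrt{\gamma^2-1}$ (using $\rho_M^{-1}=(\gamma+\sqrt{\gamma^2-1})^{-1}=\gamma-\sqrt{\gamma^2-1}$, since the product of the two is $1$). Substituting the expression for $\gamma$ and simplifying $\gamma^2-1=\frac{(\sqrt{\kappa}+1)^2-(\sqrt{\kappa}-1)^2}{(\sqrt{\kappa}-1)^2}=\frac{4\sqrt{\kappa}}{(\sqrt{\kappa}-1)^2}$, one gets $\sqrt{\gamma^2-1}=\frac{2\sqrt[4]{\kappa}}{\sqrt{\kappa}-1}$, so that
\[
\rho_M^{-1}=\gamma-\sqrt{\gamma^2-1}=\frac{\sqrt{\kappa}+1-2\sqrt[4]{\kappa}}{\sqrt{\kappa}-1}
=\frac{(\sqrt[4]{\kappa}-1)^2}{(\sqrt[4]{\kappa}-1)(\sqrt[4]{\kappa}+1)}
=\frac{\sqrt[4]{\kappa}-1}{\sqrt[4]{\kappa}+1},
\]
writing $\kappa=\kappa(A)$ for brevity. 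Squaring yields $\rho_M^{-2}=\left(\frac{\sqrt[4]{\kappa(A)}-1}{\sqrt[4]{\kappa(A)}+1}\right)^{2}$, which is exactly the claimed bound.

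There is no serious obstacle here: the argument is a routine limit computation combined with an algebraic simplification, and both ingredients are already in place from the Theorem and from (\ref{gam}). The only point requiring a little care is the factorization $\sqrt{\kappa}+1-2\sqrt[4]{\kappa}=(\sqrt[4]{\kappa}-1)^2$, which is immediate once one recognizes $\sqrt{\kappa}=(\sqrt[4]{\kappa})^2$, and the verification that $\rho_M>1$ (so that $\rho_M^{-2}<1$ and the geometric decay is genuine) — this follows since $\kappa(A)>1$ forces $\gamma>1$, hence $\rho_M=\gamma+\sqrt{\gamma^2-1}>1$. One should also note that $\limsup$ is used rather than $\lim$ only because the Theorem's bound holds for $k$ sufficiently large; the inequality for the $\limsup$ is unaffected by the finitely many small values of $k$.
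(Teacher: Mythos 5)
Your proof is correct and follows essentially the same route as the paper: the paper's proof simply records the identity $\rho_M=\gamma+\sqrt{\gamma^2-1}=\bigl(\sqrt[4]{\kappa(A)}+1\bigr)/\bigl(\sqrt[4]{\kappa(A)}-1\bigr)$ via (\ref{gam}), leaving the $k$-th-root limit of the Theorem's bound implicit, which you have merely spelled out. Your algebraic simplification of $\rho_M^{-1}$ matches the paper's stated value exactly, so there is nothing to correct.
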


\begin{proof} 
By (\ref{gam})
\[
\rho _{M} =\gamma +\sqrt{\gamma ^{2}-1} =\frac{\sqrt[4]{\kappa ( A
)}+1}{\sqrt[4]{\kappa ( A)}-1}.
\]
\end{proof}

\begin{remark}
From the above analysis it is easy to observe that for the Laplacian
operator $L$, discretized with standard central differences (3-points or 
5-points in one or two dimensions, respectively), we have
\[
\left( \frac{\sqrt[4]{\kappa (L)}+1}{\sqrt[4]{\kappa (L)}-1}\right)
^{2}\approx 1+\frac{2\pi }{N},
\]
where $N$ represents the number of discretization points in one dimension.
\end{remark}

In  Figure \ref{fig:Figura2} we plot the relative error for the one- and
two-dimensional Laplacian discretized as in the previous remark
for some values of $\alpha.$ The geometric convergence theoretically proved in
this section is clear in the pictures, together with the substantial
independence of $\alpha,$ which is absorbed by the weight function.
It is also quite evident that the method is particularly effective for the 
two-dimensional case; this represents an important feature since most of the 
standard techniques for the discretization of the fractional Laplacian only work 
in one dimension.

\begin{figure*}[tbp]
\begin{center}
\includegraphics[width=0.8\textwidth,height=0.3\textheight]{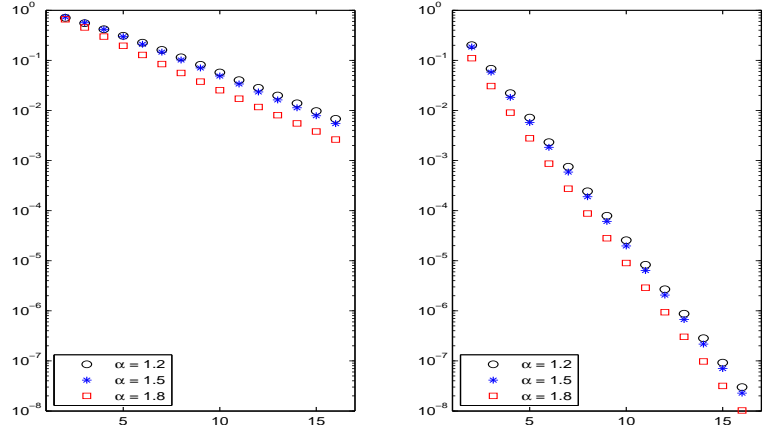}
\end{center}
\caption{Relative error of the rational approximation versus $k,$ the number
of points of the Gauss-Jacobi rule, for some values of $\alpha.$ The one-
and the two-dimensional cases are on the left and on the right, respectively.
In the first case the dimension of the problem is $200$ and in the second one it
is $400.$}
\label{fig:Figura2}
\end{figure*}

Independently of the above analysis, we observe that the error with respect
to $k$ can be easily monitored by using (\ref{normerr}) with $\lambda
=\lambda _{\min }$ or $\lambda =\lambda _{\max },$ and hence
working scalarly. This consideration suggests a simple strategy for the
choice of $k$ when the rational approximation is employed to solve (\ref{IVP1}). 
Indeed, neglecting for a moment the forcing term, we have that the
most important component of the solution is the one corresponding to the
smallest eigenvalue, since it leads to the slowest decay rate, given by $\exp 
\left( -t\lambda _{\min }^{\alpha /2}/h^{\alpha }\right) $ (here 
$\kappa_\alpha=1$ for simplicity). Using our
approximation, this decay will be replaced by $\exp \left( -t\left( \lambda
_{\min }^{\alpha /2}+\varepsilon _{k}\right) /h^{\alpha }\right) $ where
\begin{align*}
\left\vert \varepsilon _{k}\right\vert  &=\left\vert \lambda _{\min
}(R_{k}(L))-\lambda _{\min }^{\alpha /2}(L)\right\vert  \\
&\leq \left\vert R_{k}(\lambda _{\min }(L))-\lambda _{\min }^{\alpha
/2}(L)\right\vert.
\end{align*}
In this way, working scalarly we can easily define $k$ in order to keep under
control the error factor $\exp \left( -t\varepsilon _{k}/h^{\alpha }\right) $
and hence the error with respect to each component. While this represents a
general indication, in the experiments of the next section we have taken $k$
very small in order to show the computational efficiency of the method.

\section{Solving fractional in space reaction-diffusion problems} \label{sec5}

As already said in Section \ref{sec2}, if we discretize on a uniform mesh
the fractional Laplacian operator occurring in (\ref{prob}), we obtain the
initial value problem
\begin{eqnarray}  \label{IVPfraclap}
\frac{d \mathbf{u}}{dt} = - \frac{\kappa_\alpha}{h^\alpha} \, L^{\alpha/2}
\mathbf{u}+ \mathbf{f}, \quad \mathbf{u}(0)=\mathbf{u}_0.
\end{eqnarray}
Therefore, the application of the rational approximation (\ref{rapp}) of 
$L^{\alpha/2},$ based on the $k$-point Gauss-Jacobi rule and given by $R_k(L) 
\equiv M^{-1}K,$ leads to the following initial value problem
\begin{align} 
M\, \frac{d \mathbf{u}}{dt} &= - \frac{\kappa_\alpha}{h^\alpha} \, K
\mathbf{u}+ M\, \mathbf{f}, \qquad t>0,  \label{IVPrapp} \\
\mathbf{u}(0)&=\mathbf{u}_0.   \nonumber
\end{align}
The sparse structure of the matrices $M$ and $K$ represents the main
advantage of this approach in terms of computational work and memory saving.\\

Following the examples reported in \cite{Ya10}, we first focus on two
fractional in space diffusion equations with different initial conditions.
Then, we consider a reaction-diffusion equation with forcing term
independent of the solution. All of these examples are in one spatial
dimension. In each case, discretizing the spatial domain $\Omega = 
(0,a)$  with a uniform mesh having stepsize 
$h=a/{(N+1)},$  we consider the standard
3-point central difference discretization of the Laplacian $L= \mbox{tridiag} 
(-1,2,-1) \in \mathbb{R}^{N \times N}.$ 
Finally, we also report the results obtained by applying our
approach for the numerical solution of a fractional reaction-diffusion
example in two space dimensions. In this case, we discretize in space the 
problem via the 5-point finite difference stencil. The matrix $L$ is therefore 
a block tridiagonal matrix of size $N^2$ having the following form 
$L=\mbox{tridiag} (-I, B, -I),$ with $I$ denoting the identity matrix of size 
$N$ and $B=\mbox{tridiag} (-1,4,-1) \in \mathbb{R}^{N \times N}.$ \\

In all examples, we solve (\ref{IVPfraclap}) and (\ref{IVPrapp}) by the 
\textrm{MATLAB} routine \textrm{ode15s}. Moreover,
we indicate by \textit{`exact'} the analytical solution, by \textit{`MT'} the
solution of the problem  (\ref{IVPfraclap}), obtained by applying the matrix
transfer approach, and by \textit{`rational'} the solution arising from
(\ref{IVPrapp}).

\begin{example} \label{es:Es1}
Consider the problem (\ref{prob}) on the spatial domain $\Omega=
(0,\pi),$ with $\kappa_\alpha =0.25$ and $f=0.$ According to \cite[Section
3.1]{Ya10}, the analytic solution corresponding to the initial condition
$u_0(x)=x^2(\pi-x)$ is given by
\[
u(x,t) = \sum_{n=1}^\infty \frac{8(-1)^{n+1} -4}{n^3} \sin(nx) \exp(-
\kappa_\alpha n^\alpha t).
\]
Setting $\alpha=1.8,$ at time $t=0.4$ in the left-hand side of Figure \ref{fig:fig1} 
the exact solution is compared with the numerical solutions of the
semi-discrete problems (\ref{IVPfraclap}) and (\ref{IVPrapp}) with
$h=\pi/201$ (that is $h=0.0157$) and $k=2.$ On the right picture, the
step-by-step maximum norm of the difference between the analytic solution and 
the numerical ones is reported. As one can see, the numerical solution provided 
by the rational approximation is in good agreement with the one obtained by the 
matrix transfer technique.
%
%
%
\begin{figure*}[tbp]
\begin{center}
\includegraphics[width=0.8\textwidth,height=0.3\textheight]{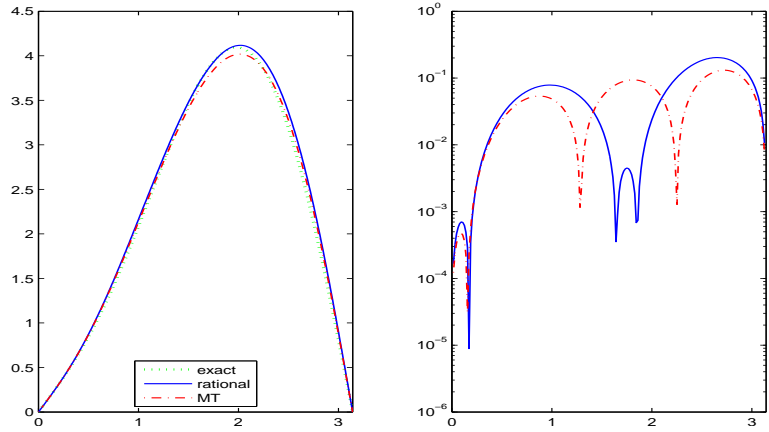}
\end{center}
\caption{Comparison of the  analytic solution of the problem in Example \ref{es:Es1} 
with the numerical solutions provided by the rational approach and the matrix
transfer technique at $t=0.4$  (left) and corresponding errors (right).}
\label{fig:fig1}
\end{figure*}

\end{example}

To illustrate the impact of the fractional order in space we consider the
following example which differs from the previous one only for the choice of
the initial condition.

\begin{example} \label{es:Es2}
Consider now the problem (\ref{prob}) on the spatial domain $\Omega=(0,\pi),$ 
with $\kappa_\alpha =0.25,$ $f=0$ and $u_0(x)=\sin(4x).$ 
 
In this example, we use  $N=500$ and we compute the numerical solutions 
provided by the matrix transfer technique and  the rational approximation
approach with $k=3.$  In particular, the solutions profiles corresponding to 
$\alpha=1.1$ and $\alpha=1.9$ are shown in Figure  \ref{fig:fig2} at time $t=0.3.$ It 
is interesting to see that the  diffusion depends on the value of the fractional 
order $\alpha.$
\begin{figure*}[tbp]
\begin{center}
\includegraphics[width=0.8\textwidth,height=0.3\textheight]{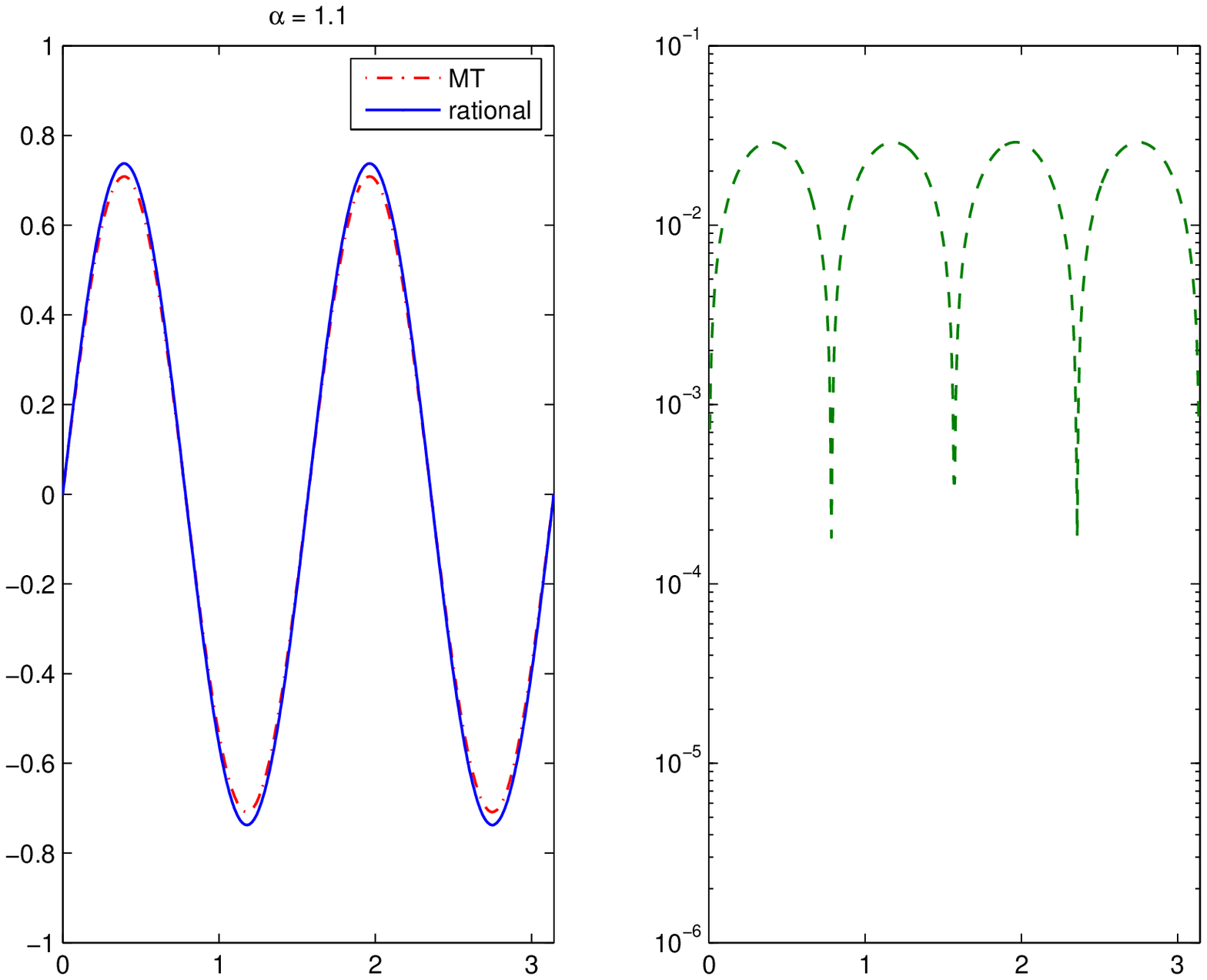}
\includegraphics[width=0.8\textwidth,height=0.3\textheight]{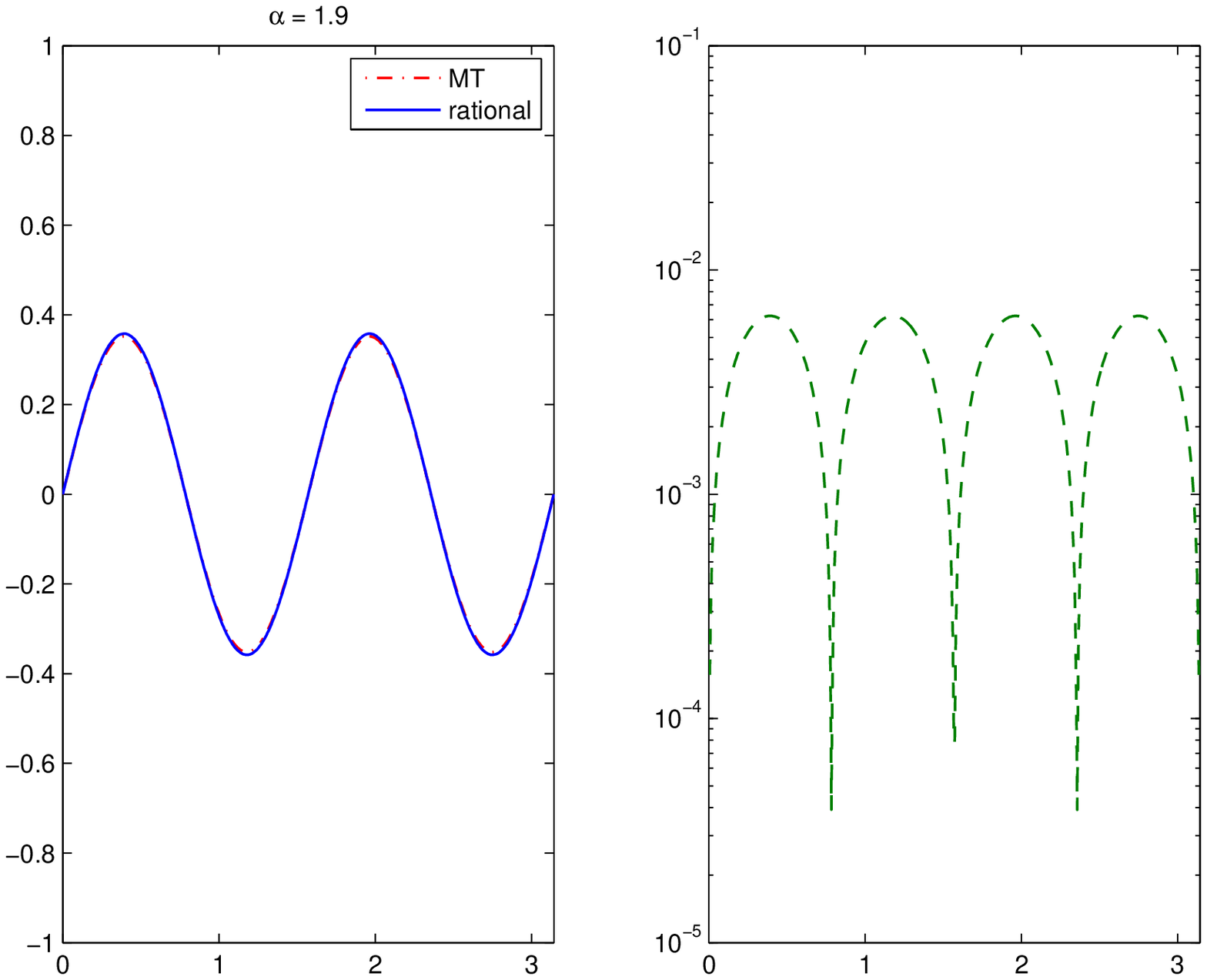}
\end{center}
\caption{Comparison of the numerical solutions of the problem in Example \ref{es:Es2} 
provided by the  \textit{MT} and   \textit{rational} approaches  
at $t=0.3$  (left) and corresponding step-by-step maximum norm of their 
difference (right) for $\alpha =1.1$ (top) and $\alpha =1.9$ (bottom), 
respectively.}
\label{fig:fig2}
\end{figure*}

\end{example}

\begin{example} \label{es:Es3}
Consider the problem (\ref{prob}) on the spatial domain $\Omega=(0,1),$ with   
$u_0(x)= 0$ and
\begin{align*}
f(x,t) &= \frac{ \kappa_\alpha t^\alpha}{2 \cos(\alpha \pi/2)} \left( \frac{%
2}{\Gamma(3-\alpha)} [x^{2-\alpha} +(1-x)^{2-\alpha}] \right. \\
&\left. - \frac{12}{\Gamma(4-\alpha)} [x^{3-\alpha} + (1-x)^{3-\alpha}] +
\frac{24}{\Gamma(5-\alpha)} [x^{4-\alpha} +(1-x)^{4-\alpha}] \right) + \\
& + \alpha \, t^{\alpha-1} x^2 (1-x)^2.
\end{align*}
The exact solution is given by
\[
u(x,t) = t^\alpha x^2 (1-x)^2.
\]

In our experiments, we select the model parameters $\kappa_\alpha=2, \alpha=1.7$ 
and discretize the spacial domain using  $N=400.$  In Figure  \ref{fig:fig3} we report 
the step-by-step error provided by the numerical solutions obtained by applying 
the Gauss-Jacobi rule with  $k=1,3,5$  at $t=0.5$ compared with the one obtained
by solving directly (\ref{IVPfraclap}). As expected, the approximation of the 
solution provided by the rational approach improves as $k$ increases.
%
%
%
%
\begin{figure*}[tbp]
\begin{center}
\includegraphics[width=0.8\textwidth,height=0.3\textheight]{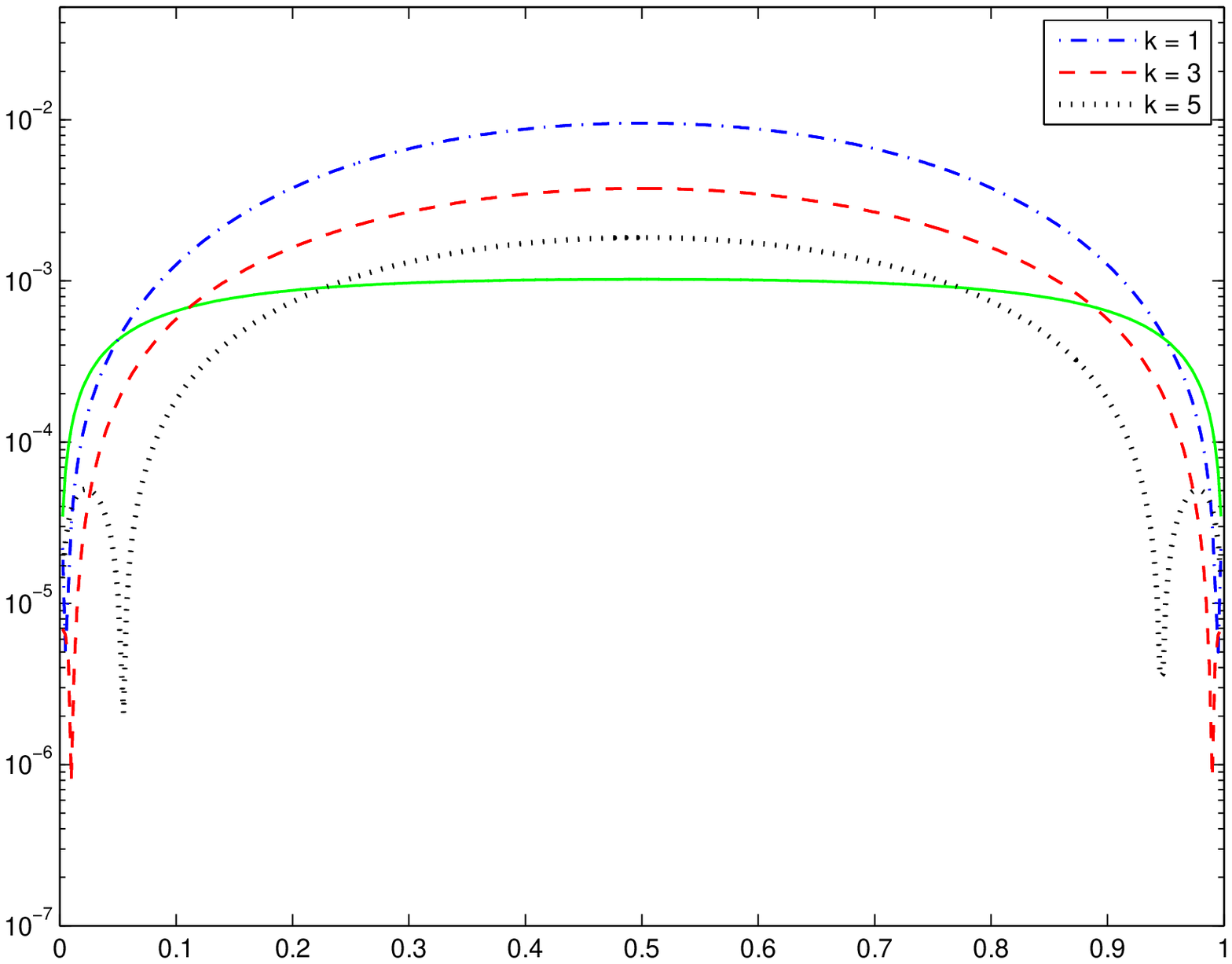}
\end{center}
\caption{Comparison of the errors provided by solving the problem of the 
Example  \ref{es:Es3} using  both \textit{rational} with $k=1$ (blue dashed-dot-line), 
$k=3$ (red dashed-line) and $k=5$ (black dot-line) and \textit{MT} (green 
solid-line).}
\label{fig:fig3}
\end{figure*}

\end{example}

\begin{example} \label{es:Es4}
We solve the fractional reaction-diffusion equation in two space dimensions
\[
\frac{\partial u(x,y,t)}{\partial t} = - \kappa_\alpha \,
(-\Delta)^{\alpha/2} u(x,y,t) + f(x,y,t,u), \qquad (x,y)
\in  (0,1) \times (0,1),
\]
with
\[
 f(x,y,t,u) = t^{\alpha} \frac{\kappa_\alpha}{16} \sum_{j=1}^4 
(1+\mu_j^{\alpha/2}) v_j + \alpha  t^{\alpha-1} \sin^3(\pi x)\sin^3(\pi 
y)-\kappa_\alpha u,
\]
where
\begin{align*}
 &v_1= 9 \sin(\pi x)\sin(\pi y),  \hspace{-2cm}&  \mu_1&=2\pi^2, \\
 &v_2= -3  \sin(\pi x)\sin(3\pi y),  \hspace{-2cm}& \mu_2&=10\pi^2, \\
 &v_3= -3 \sin(3\pi x)\sin(\pi y),  \hspace{-2cm}& \mu_3&=10\pi^2, \\
 &v_4= \sin(3\pi x)\sin(3\pi y),  \hspace{-2cm} & \mu_4&=18\pi^2,
\end{align*}
subject to $u(x,y,0)=0$ and homogeneous Dirichlet boundary conditions 
\cite{BO2014}.

The exact solution to this problem is
\[
u(x,y,t)=t^{\alpha} \sin^3(\pi x)\sin^3(\pi y).
\]

The numerical solution provided by the rational approach based on the 
Gauss-Jacobi rule with $k=7$ and the matrix transfer technique are drawn at 
$t=1$ in Figure  \ref{fig:fig4} using $\alpha=1.5,$ $\kappa_\alpha=10$ and $N=40$ 
points in each domain direction. It is worth noting that in order to obtain the 
same accuracy, the matrix transfer technique costs three times the rational 
approach.

\begin{figure*}[tbp]
\begin{center}
\includegraphics[width=0.8\textwidth,height=0.3\textheight]{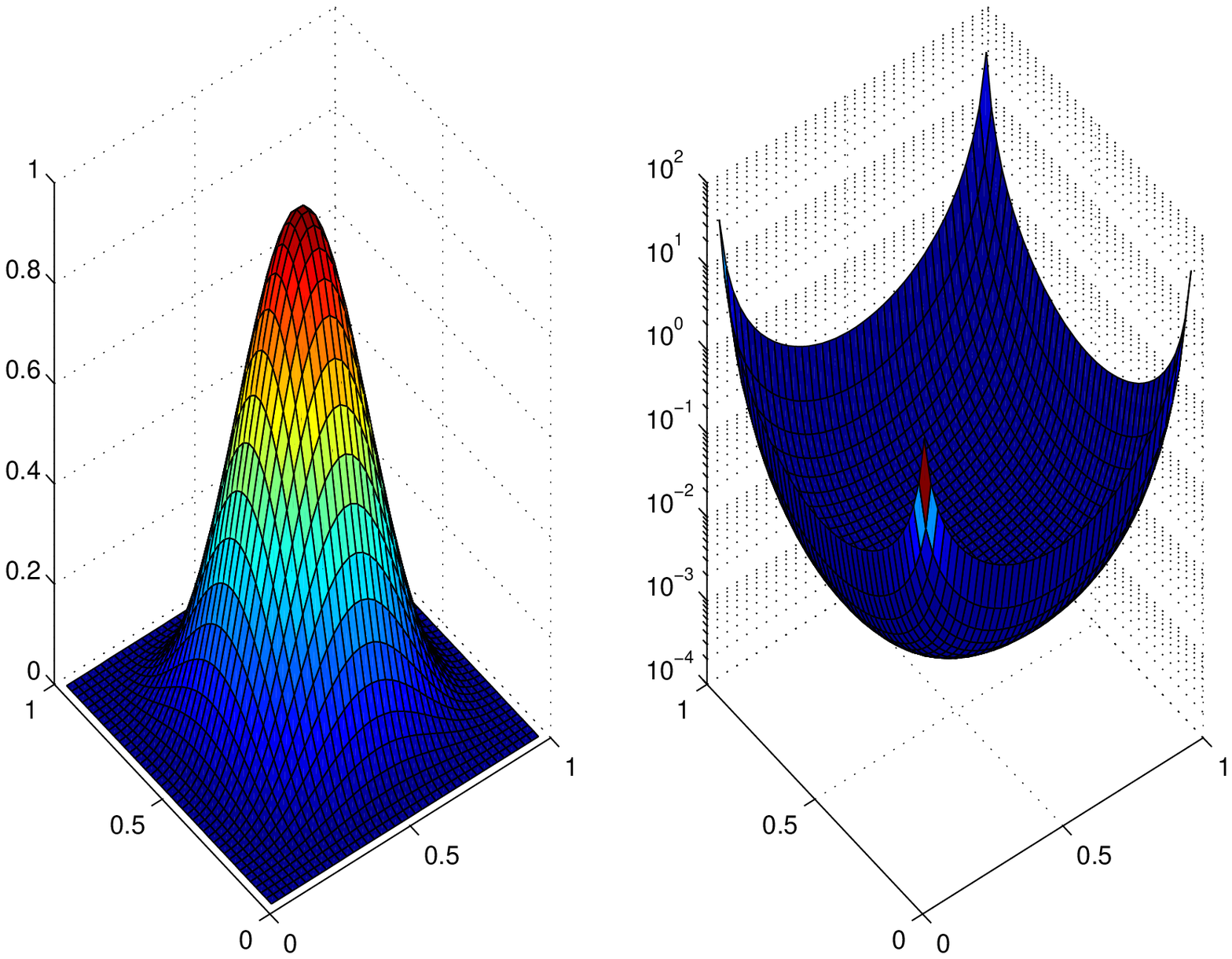}
\includegraphics[width=0.8\textwidth,height=0.3\textheight]{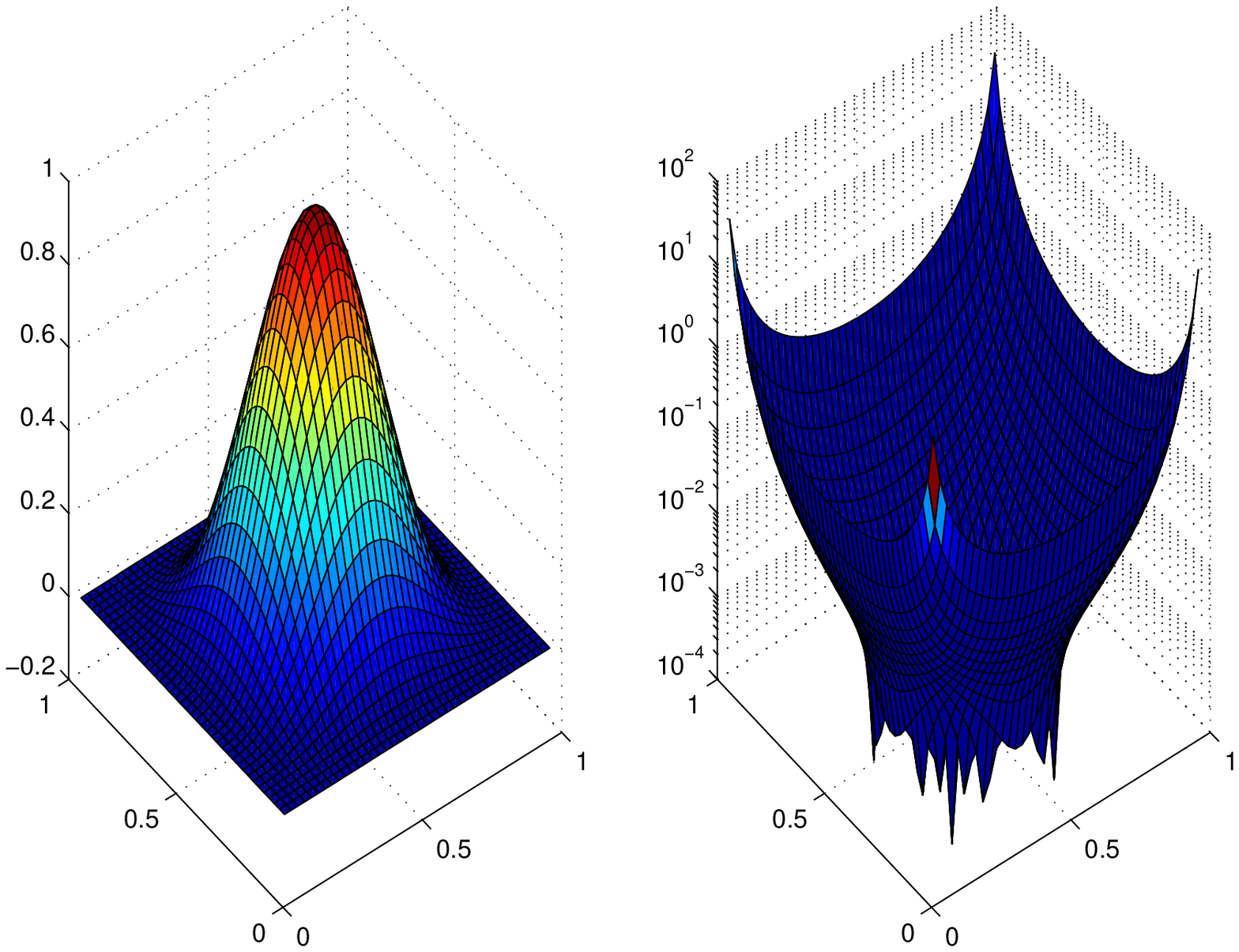}
\end{center}
\caption{Comparison of the analytical solution of the problem in Example  \ref{es:Es4} 
with the numerical solution provided  at $t=1$ by \textit{rational} (top) and 
\textit{MT} (bottom) and corresponding relative errors (right) for $\alpha =1.5$ 
and  $\kappa_\alpha=10.$}
\label{fig:fig4}
\end{figure*}
%
%
%
 \end{example}

\section{Conclusions} \label{sec:conclusions}
In this paper we have proposed a rational approximation to the discrete
fractional Laplacian. When applied for solving the reaction-diffusion equations
this leads to a semi-discrete problem which can be solved in an efficient
way due to the band structure of the matrices occurring in the definition of
the approximation. Another advantage of this approach is that it can be
generalized to high dimensions without modifying the overall solution
methodology.


\bibliography{references}

\end{document}